\newtheorem{thrm}{Theorem}[section]
\newtheorem{lem}[thrm]{Lemma}
\newtheorem{cor}[thrm]{Corollary}
\theoremstyle{definition}
\newtheorem{definition}[thrm]{Definition}
\newtheorem{remark}[thrm]{Remark}
\numberwithin{equation}{section}
\newcommand\supp{\mathop{\rm supp}}
\author{B. Willson}
\address{
Department of Mathematics\\
Hanyang University\\
222 Wangsimni-ro, Seongdong-gu\\
Seoul, Korea}
\email{bwillson@ualberta.ca}
\thanks{The author gratefully acknowledges the financial support of Hanyang University.}
\keywords{invariant measure, hypergroup, amenable, function translations}
\subjclass{Primary 43A62, Secondary 43A05, 43A07}
\begin{document}

\title[Fixed point theorem and Haar measure for certain hypergroups]{A Fixed Point Theorem and the Existence of a Haar Measure for Hypergroups Satisfying Conditions Related to Amenability}

\begin{abstract}
In this paper we present a fixed point property for amenable hypergroups which is analogous to Rickert's fixed point theorem
for semigroups.  It equates the existence of a left invariant mean on the space of weakly right uniformly continuous functions to the existence of a fixed point for any action of the hypergroup.   
Using this fixed point property, a certain class of hypergroups are shown  to have a left Haar measure.
\end{abstract}
\maketitle

\section{Introduction}
Hypergroups arise as generalizations of the measure algebra of a locally compact group wherein the product of two points is a probability measure rather than a single point.   The formalization of hypergroups was introduced in the 1970s by Jewett \cite{jewett}, Dunkl \cite{dunkl}, and Spector \cite{spector}.  Actions of hypergroups have been considered in \cite{singh, tahmasebi}.  Amenable hypergroups have been considered in \cite{skantharajah, lasserskantharajah, willsonTAMS, alaghmandan}.  As with groups, there are connections between invariant means on function spaces and fixed points of actions of the hypergroup.

It is a longstanding problem whether or not every hypergroup has a left Haar measure.  It is known that if a hypergroup is compact, discrete, or abelian then it does admit a Haar measure.

Every locally compact group 
$G$ admits a left Haar measure $\lambda_G$.  This Haar measure can be viewed as a positive, linear, (unbounded), left translation invariant
functional on $C_C(G)$.
It follows from the existence of this functional that for positive $f\in C_C(G)$ and positive $\mu, \nu\in M(G)$ then 
\begin{equation}
\label{condition}
\mu\ast f(x)\leq \nu\ast f(x)\ \  \forall x\in G\Rightarrow \|\mu\|\leq\|\nu\|.
\end{equation}
A similar statement can be made for amenable groups and positive, continuous, bounded functions with non-zero mean value.
One might expect this statement to be true more generally, but there are examples of unbounded functions and bounded functions on non-amenable groups for which \eqref{condition} does not hold.

The main result of this paper is to show that for amenable hypergroups, the existence of a Haar measure is equivalent to this positivity of translation property \eqref{condition}.  In section 2, we provide the neccessary preliminaries.  In section 3, we prove a fixed point theorem for the action of an amenable hypergroup.  In the final section, we use property \eqref{condition} to prove the existence of a measure which is invariant for all translations of $f$.  We build on this, and use the fixed point theorem to prove the final result.

\section{Background and Definitions}
\begin{definition}
A hypergroup, $H$, is a non-empty locally compact Hausdorff topological space which 
satisfies the following conditions (see \cite{bloomheyer} for more details on hypergroups):
\begin{enumerate}
\item
There is a binary operation $\ast$, called convolution, on the vector space of bounded Radon measures
turning it into an algebra.
\item
For $x, y\in H$, the convolution of the two point measures is a  
probability measure, and $\supp(\delta_x\ast\delta_y)$ is compact.
\item
The map $(x,y)\mapsto \delta_x\ast\delta_y$ from $H\times H$ to the compactly supported probability measures on $H$ is continuous.
\item
The map $H\times H \ni (x,y)\mapsto \supp(\delta_x\ast\delta_y)$ is continuous with respect
to the Michael topology on the space of compact subsets of $H$.
\item
There is a unique element $e\in H$ such that for every $x\in H$, 
$\delta_x\ast\delta_e = \delta_e\ast\delta_x = \delta_x$.
\item
There exists a homeomorphism $\check{}:H\rightarrow H$ such that for all $x\in H$, 
$\check{\check{x}}=x$, which can be extended to $M(H)$ via $\check{\mu}(A) = \mu(\{ 
x\in H: \check{x}\in A\})$, and such that $(\mu\ast\nu)\check{}=\check{\nu}\ast\check{\mu}$.
\item
For $x,y\in H$, $e\in\supp(\delta_x\ast\delta_y)$ if and only if $y=\check{x}$.
\end{enumerate}
\end{definition}

\begin{definition}
We denote the continuous and bounded complex-valued functions on $H$ by $C(H)$.  For the compactly supported continuous functions we use $C_C(H)$.  We use $C(H)^+, C_C(H)^+$ to denote the functions in those spaces which take only non-negative real values. We denote the complex space of bounded regular Borel measures on $H$ by $M(H)$, those that are compactly supported measures by $M_C(H)$, and those that are real valued measures by $M(H,\mathbb{R})$.
\end{definition}

\begin{definition}
A left Haar measure for $H$ is a non-zero regular Borel measure (with values in $[0,\infty]$), $\lambda$ which is left-invariant in the sense that for
any $f\in C_C(H)$, we have that $\lambda(\delta_x\ast f)=\lambda(f)$ for all $x\in H$.
\end{definition}

\begin{remark}It remains an open question whether every hypergroup admits a left Haar measure.  If $H$ does admit a left Haar measure $\lambda$, 
however, it is unique up to a scalar multiple\cite{jewett}.  
For hypergroups with a left Haar measures we are able to define the standard $L^p(H)$ function spaces.
\end{remark}

\begin{definition}
We say that a continuous function $f\in C(H)$ is right uniformly continuous [weakly right uniformly continuous] if the map
\[
H\ni x\mapsto \delta_x\ast f
\]is continuous in norm [weakly].  We denote the collection of right uniformly continuous functions [weakly right uniformly continuous] on $H$ by $UCB_r(H)$ [$WUCB_r(H)$].
\end{definition}
\begin{remark}
Skantharajah \cite{skantharajah} showed that for hypergroups with left Haar measure, $UCB_r(H) = L^1(H)\ast L^\infty(H)$.
\end{remark}

\section{Fixed point property}
Let $H$ be a hypergroup.
Let $E$ be a Hausdorff locally convex vector space and let $K\subset E$ be a compact, convex subset.  
Suppose that there is a separately continuous mapping 
$\cdot:H\times K\rightarrow K$.  Then for $x, y\in H$ and
$\xi\in K$ the weak integral 
\[
\int_H (t\cdot \xi) d(\delta_x\ast\delta_y)(t) 
\]
exists uniquely in $K$.

\begin{definition}
A {\it separately continuous [jointly continuous] action} of $H$ on $K$ is a separately continuous [jointly continuous] mapping $\cdot:H\times K\rightarrow K$ such that
\begin{enumerate}
\item
$e\cdot \xi = \xi$ for all $\xi \in K$;
\item
$x\cdot(y\cdot\xi) = \int_H (t\cdot \xi) d(\delta_x\ast\delta_y)(t)$.
\end{enumerate}

Furthermore, the action is called affine if, for each $x\in H$ $\xi\mapsto x\cdot \xi$ is affine.
\end{definition}

The following theorem is similar to a result of Rickert for amenable semigroups as presented in \cite{paterson}.
\begin{thrm}
\label{thm:FPWUCB}
Suppose that $UCB_r(H)$ has a left invariant mean $m$.  Then for each jointly continuous affine action of $H$ on some
$K$, a compact convex subset of a Hausdorff locally convex vector space, there is a point $\xi_0\in K$ such that
$x\cdot \xi_0=\xi_0$ for all $x\in H$.  Furthermore, the result holds if $UCB_r(H)$ is replaced by $WUCB_r(H)$ and jointly 
continuous is replaced by
separately continuous.
\end{thrm}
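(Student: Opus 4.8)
The plan is to transport the classical barycentre proof of Rickert's theorem to the hypergroup setting, using action axiom~(2) to trade translates of the mean for the convolutions $\delta_x\ast f$.

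First I would introduce, for a fixed but arbitrary $\xi\in K$ and each $\phi\in E^*$, the scalar function $f_\phi(x)=\phi(x\cdot\xi)$. Boundedness of $f_\phi$ is immediate from compactness of $K$ (so $\phi$ is bounded on $K$), and continuity follows from continuity of the action and of $\phi$. The substantive point is to show $f_\phi\in UCB_r(H)$. Since $\phi$ commutes with the weak integral, action axiom~(2) gives
\[
(\delta_x\ast f_\phi)(y)=\int_H f_\phi(t)\,d(\delta_x\ast\delta_y)(t)=\phi\!\left(\int_H (t\cdot\xi)\,d(\delta_x\ast\delta_y)(t)\right)=\phi\big(x\cdot(y\cdot\xi)\big).
\]
To verify norm continuity of $x\mapsto\delta_x\ast f_\phi$ I would use that $(x,\zeta)\mapsto\phi(x\cdot\zeta)$ is jointly continuous on $H\times K$ with $K$ compact, so $x\mapsto\big(\zeta\mapsto\phi(x\cdot\zeta)\big)$ is continuous into $C(K)$ for the sup norm; as every $y\cdot\xi$ lies in $K$, this yields $\sup_y|(\delta_{x'}\ast f_\phi)(y)-(\delta_x\ast f_\phi)(y)|\to0$ as $x'\to x$, i.e.\ $f_\phi\in UCB_r(H)$.

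Next I would realize the mean $m$ as a weak$^*$ limit of a net $\mu_\alpha=\sum_i c_i^\alpha\delta_{x_i^\alpha}$ of finitely supported probability measures (these are weak$^*$ dense in the means) and form the barycentres $b_\alpha=\sum_i c_i^\alpha\,(x_i^\alpha\cdot\xi)\in K$. Passing to a subnet, compactness of $K$ gives $b_\alpha\to\xi_0\in K$, and then $\phi(\xi_0)=\lim_\alpha\mu_\alpha(f_\phi)=m(f_\phi)$ for every $\phi$. To see that $\xi_0$ is the desired fixed point, fix $x\in H$; affineness and continuity of $\zeta\mapsto x\cdot\zeta$ give $x\cdot\xi_0=\lim_\alpha\sum_i c_i^\alpha\,x\cdot(x_i^\alpha\cdot\xi)$, whence, applying $\phi$ and using the displayed identity,
\[
\phi(x\cdot\xi_0)=\lim_\alpha\sum_i c_i^\alpha\,(\delta_x\ast f_\phi)(x_i^\alpha)=\lim_\alpha\mu_\alpha(\delta_x\ast f_\phi)=m(\delta_x\ast f_\phi).
\]
Since $UCB_r(H)$ is invariant under left translation, $\delta_x\ast f_\phi\in UCB_r(H)$ and left invariance of $m$ applies: $m(\delta_x\ast f_\phi)=m(f_\phi)=\phi(\xi_0)$. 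As $E^*$ separates the points of the Hausdorff space $E$, this forces $x\cdot\xi_0=\xi_0$ for all $x\in H$.

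For the second assertion I would repeat the argument verbatim, checking only that under mere separate continuity $f_\phi\in WUCB_r(H)$: for a measure $\nu$ one has $\nu(\delta_x\ast f_\phi)=\int_H\phi\big(x\cdot(y\cdot\xi)\big)\,d\nu(y)$, and separate continuity together with the bound $|f_\phi|\le\sup_K|\phi|$ lets dominated convergence deliver weak continuity of $x\mapsto\delta_x\ast f_\phi$. \textbf{The main obstacle} I anticipate is exactly this membership step, namely proving that $f_\phi$ lies in the correct translation-continuity class. The weak case is soft (dominated convergence), but the norm case requires the uniformity over the whole orbit $\{y\cdot\xi\}$ that only joint continuity together with compactness of $K$ provides; securing that uniformity cleanly, and confirming that $UCB_r(H)$ and $WUCB_r(H)$ are genuinely left-translation invariant so that $m$ may be applied to $\delta_x\ast f_\phi$, is where the care is needed.
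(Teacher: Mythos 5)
Your architecture is sound and, in the jointly continuous case, essentially complete: the function $f_\phi(y)=\phi(y\cdot\xi)$, the identity $(\delta_x\ast f_\phi)(y)=\phi\bigl(x\cdot(y\cdot\xi)\bigr)$ via action axiom (2), and the compactness-of-$K$ argument for membership in $UCB_r(H)$ all match what the paper needs. Structurally your proof is a concrete version of the paper's: where you approximate $m$ weak$^*$ by finitely supported probability measures and pass to a convergent subnet of barycentres, the paper pushes $m$ forward through $\phi\mapsto f_\phi$ and invokes the affine homeomorphism between $K$ and the set of \emph{all} means on $Aff(K)$ (see \cite{paterson}, 2.20) to land directly on $\xi_0$; the two devices are interchangeable here. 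Your worry about left-translation invariance of the function classes also dissolves, since $\delta_x\ast f_\phi=f_\psi$ for the continuous affine function $\psi(\zeta)=\phi(x\cdot\zeta)$ on $K$, so it lies in the same class by your own membership argument.

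The genuine gap is in the separately continuous case, precisely where you flagged the main obstacle. Your justification that $f_\phi\in WUCB_r(H)$ is dominated convergence applied to $\nu(\delta_x\ast f_\phi)=\int_H\phi\bigl(x\cdot(y\cdot\xi)\bigr)\,d\nu(y)$. This fails on two counts. First, $x'\to x$ is a net, not a sequence ($H$ is not assumed first countable), and pointwise convergence of a uniformly bounded \emph{net} of functions does not imply convergence of the integrals (consider $g_F=1-\mathbf{1}_F$ indexed by finite subsets $F$ of $[0,1]$). Second, weak continuity of $x\mapsto\delta_x\ast f_\phi$ must be tested against all of $C(H)^*$, the dual of the sup-normed space of bounded continuous functions, which is strictly larger than $M(H)$; dominated convergence says nothing about functionals outside $M(H)$. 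The paper's device repairs both defects at once: for a mean $F$ on $C(H)$, the functional $Aff(K)\ni\psi\mapsto F\bigl(y\mapsto\psi(y\cdot\xi)\bigr)$ is a mean on $Aff(K)$, hence is evaluation at some $\xi_2\in K$; consequently $\langle F,\delta_x\ast f_\phi\rangle=\phi(x\cdot\xi_2)$, which is continuous in $x$ by separate continuity alone, and since $C(H)^*$ is the linear span of the means, weak continuity against the full dual follows. You need this (or an equivalent) argument in place of dominated convergence; with that substitution the rest of your proof goes through.
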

\begin{proof}

Suppose that there is a hypergroup action of $H$ on $K$.  We denote the set of affine functions on $K$ by $Aff(K)$.  It is clear
that for each point $\xi\in K$, evaluation at $\xi$ is a mean on $Aff(K)$.  Indeed, $K$ can be identified with the collection of {\bf all}
means on $Aff(K)$ and this identification is an affine homeomorphism.  (See for instance \cite{paterson} 2.20)

Given this identification, we see that the existence of a fixed point in $K$ is equivalent to the existence of a mean on $Aff(K)$ which
is invariant under the action of $H$.  

Suppose that $\phi\in Aff(K)$ and $\xi_0\in K$.  Let $\hat{\xi}_0(\phi)\in C(H)$ be defined by $\hat{\xi}_1(\phi)(x)=\phi(x\cdot \xi_1)$.
It is clear that $\hat{\xi}_1(\phi)$ is a continuous function provided that that action of $H$ is (separately) continuous.  We further
claim that if the action is separately continuous then $\hat{\xi}_1(\phi)$ is in $WUCB_r(H)$ and if it is jointly continuous then 
$\hat{\xi}_1(\phi)$
is in $UCB_r(H)$.

To show the first claim, consider some mean $F$ on $C(H)$ and some net $x_\alpha\rightarrow x$ in $H$.  Then 
$F\circ \hat{\xi}_1: Aff(K)\rightarrow \mathbb{C}$ is a mean on $Aff(K)$.  It follows then, that there is some $\xi_2\in K$
such that $F\circ \hat{\xi}_1(\phi) = \phi(\xi_2)$.  Therefore, 

\begin{align*}
\langle F,  \delta{\check{x_\alpha}}\ast\hat{\xi}_1(\phi)\rangle  &= \langle F\circ \hat{\xi}_1, \delta{\check{x_\alpha}}\ast\phi\rangle\\
&=\delta{\check{x_\alpha}}\ast\phi(\xi_2)\\
&=\phi(x_\alpha\cdot\xi_2)\\
&\rightarrow \phi(x\cdot\xi_2)\\
&=\langle F,  \delta{\check{x}}\ast\hat{\xi}_1(\phi)\rangle
\end{align*}

From this we conclude that for any $F\in C(H)^*$ the same holds.  That is, that $\hat{\xi}_0(\phi)$ is in $WUCB_r(H)$.
Now since there is a left invariant mean $M$ on $WUCB_r(H)$ it follows that $M\circ\hat{\xi}_1$ is a mean on $Aff(K)$ which
corresponds to evaluation at some point $\xi_0\in K$.  It is apparent that $\xi_0$ is a fixed point of the action of $H$.

For the second claim, observe that $\|\delta_{\check{x_\alpha}}\ast\phi-\delta_{\check{x}}\ast\phi\|\rightarrow 0$ as 
$x_\alpha\rightarrow x$ 
since the action is jointly continuous.  Furthermore, since $\hat\xi$ is contractive for each $\xi\in K$, 
$\|\delta_{\check{x_\alpha}}\ast\hat{\xi}_1(\phi)-\delta_{\check{x}}\ast\hat{\xi}_1(\phi)\|\rightarrow 0$ as $x_\alpha\rightarrow x$.
But this shows that $\hat{\xi}_1(\phi)$ is in $UCB_r(H)$ as required.  By a similar argument as above, the mean on $UCB_r(H)$ generates a 
fixed point in $K$.

\end{proof}

\section{Existence of a left Haar measure}

The existence of a Haar measure on an arbitrary hypergroup is still an open question.  In this section we present an 
approach motivated in part by a result of Izzo\cite{izzo} which uses the Markov-Kakutani fixed point theorem to prove the existence of a 
Haar measure on abelian groups.  We use the fixed point theorem  \eqref{thm:FPWUCB}
from the previous section to show that 
amenable hypergroups which satisfy an additional positivity property of translations have a left Haar measure.  This property is related to amenability in the following sense.

\begin{remark}

Every locally compact group $G$ admits a left Haar measure.  Some of the key properties of the Haar measure are as follows.  For any $f, g\in C_C(G)$ and $\mu\in M(G)$ we have
\begin{align*}
f(x)\leq g(x)\forall x\in G &\Rightarrow \int_G f(x)\, d\lambda(x)\leq\int_G g(x)\, d\lambda(x), \text{ and}\\
\int_G (\mu\ast f)(x)\, d\lambda(x) &=\mu(H) \int_G f(x)\, d\lambda(x).
\end{align*}
Hence for every non-zero $f\in C_C(G)^+$ and $\mu, \nu\in M(G)^+$ if $\mu\ast f\leq \nu\ast f$ then $\|\mu\|\leq\|\nu\|$.

Relatedly, if $G$ is amenable with left invariant mean $m\in C(G)^*$ then for every $f\in C(G)^+$ with $m(f)>0$ and $\mu, \nu\in M(G)^+$ if $\mu\ast f\leq \nu\ast f$ then $\|\mu\|\leq\|\nu\|$.
\end{remark}

\begin{definition}
Let $H$ be a hypergroup.  Fix some non-zero $f\in C_C(H)^+$.  We say that $H$ has the positivity property of translations of $f$ if 
for every $\mu, \nu\in M^+(H)$ 
\begin{equation}
\label{ppt}
\mu\ast f\leq \nu\ast f\Rightarrow \|\mu\|\leq\|\nu\|.
\end{equation}

\end{definition}

\begin{lem}
\label{lemma:ppt}
Suppose that $H$ is a hypergroup with property \eqref{ppt} for some non-zero $f\in C_C(H)^+$.

Then there is a positive, linear functional $\Gamma$ from the real-vector space $C_C(H, \mathbb{R})$ to $\mathbb{R}$ such that $\Gamma(f) = 1$ and $\Gamma (\rho\ast f) = \rho(H)$ for $\rho\in M_C(H, \mathbb{R})$.  
\end{lem}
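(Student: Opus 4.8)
The plan is to first define the functional on the subspace $V=\{\rho\ast f:\rho\in M_C(H,\mathbb{R})\}$ of $C_C(H,\mathbb{R})$ by the formula $\Gamma_0(\rho\ast f)=\rho(H)$, to verify that \eqref{ppt} makes this a well-defined positive linear functional on $V$, and then to extend it to all of $C_C(H,\mathbb{R})$ by an order-theoretic form of Hahn--Banach, namely the M.~Riesz (Kantorovich) extension theorem. I would note at the outset that $\rho\ast f\in C_C(H,\mathbb{R})$ whenever $\rho\in M_C(H,\mathbb{R})$: continuity of convolution together with the continuity of supports (axiom~(4)) shows $\rho\ast f$ is continuous with support inside a compact set built from $\supp\rho$ and $\supp f$, so $V$ is a genuine linear subspace.

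For well-definedness and positivity I would argue in one stroke via the Jordan decomposition. Suppose $\rho\in M_C(H,\mathbb{R})$ satisfies $\rho\ast f\ge 0$ pointwise, and write $\rho=\rho^+-\rho^-$ with $\rho^\pm$ positive measures supported in $\supp\rho$, hence in $M^+(H)$. Then $\rho^-\ast f\le\rho^+\ast f$, so \eqref{ppt} applied with $\mu=\rho^-$ and $\nu=\rho^+$ gives $\rho^-(H)=\|\rho^-\|\le\|\rho^+\|=\rho^+(H)$, that is $\rho(H)\ge 0$. Applying this to both $\rho$ and $-\rho$ shows that $\rho\ast f=0$ forces $\rho(H)=0$, so $\Gamma_0$ is well defined on $V$; linearity is immediate from linearity of $\rho\mapsto\rho\ast f$ and of $\rho\mapsto\rho(H)$, and the inequality $\rho(H)\ge 0$ just obtained is precisely the positivity of $\Gamma_0$. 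Since $f=\delta_e\ast f$, we also get $\Gamma_0(f)=\delta_e(H)=1$.

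The step I expect to be the main obstacle is verifying that $V$ is \emph{majorizing} in $C_C(H,\mathbb{R})$, i.e.\ that every $g\in C_C(H,\mathbb{R})$ satisfies $g\le h$ for some $h\in V$; this is the hypothesis that feeds the extension theorem. Here I would reduce to the local claim that for every $y\in H$ there is an $x\in H$ with $(\delta_x\ast f)(y)>0$. Granting the claim, continuity of $z\mapsto(\delta_x\ast f)(z)$ makes this inequality persist on a neighborhood of $y$; covering the compact set $\supp g$ by finitely many such neighborhoods and taking a sufficiently large positive linear combination of the corresponding point masses produces a positive $\rho$ with $\rho\ast f\ge\|g\|_\infty$ on $\supp g$ and $\rho\ast f\ge 0$ elsewhere, whence $\rho\ast f\ge g$. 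To prove the local claim, I would use that $f$ is nonzero and nonnegative, so there is a point $z_0$ and a neighborhood on which $f>0$; the involution and reversibility axioms~(6)--(7) give the support identity $z_0\in\supp(\delta_x\ast\delta_y)$ for a suitable $x$ (equivalently, the sets $\supp(\delta_x\ast\delta_y)$ cover $H$ as $x$ varies), and since $z_0$ lies in the support, the measure $\delta_x\ast\delta_y$ charges the set where $f>0$, forcing $(\delta_x\ast f)(y)>0$.

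With $\Gamma_0$ a positive linear functional on the majorizing subspace $V$ of the ordered vector space $C_C(H,\mathbb{R})$, the M.~Riesz extension theorem provides a positive linear functional $\Gamma\colon C_C(H,\mathbb{R})\to\mathbb{R}$ extending $\Gamma_0$. Then $\Gamma(f)=\Gamma_0(f)=1$ and $\Gamma(\rho\ast f)=\Gamma_0(\rho\ast f)=\rho(H)$ for all $\rho\in M_C(H,\mathbb{R})$, which is exactly the functional asserted by the lemma.
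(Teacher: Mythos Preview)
Your proof is correct and follows essentially the same route as the paper: define $\Gamma_0$ on $V=\{\rho\ast f\}$ by $\rho\ast f\mapsto\rho(H)$, use the Jordan decomposition together with \eqref{ppt} for well-definedness and positivity, and then invoke the M.~Riesz extension theorem. The only difference is that where you sketch the majorizing property directly from the hypergroup support identities, the paper simply cites \cite[Lemma~1.2.22]{bloomheyer} for the existence of $\mu\in M_C(H)^+$ with $g\le\mu\ast f$.
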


\begin{proof}
Let $V_f:=\{\mu\ast f: \mu\in M_C(H, \mathbb{R})\}$.  Since $M_C(H, \mathbb{R})$ is a vector space, it follows that $V_f$ is also a vector space.  Define the linear functional $\Gamma_f$ on $V_f$ by
\[
\Gamma_f(\mu\ast f) = \mu(H).
\] 

Claim: This map is well defined.  

If $\mu\ast f=\nu\ast f$ then 
consider a Jordan decomposition, 
$(\mu_+ - \mu_-)\ast f = (\nu_+ - \nu_-)\ast f$.
 Then $(\mu+\mu_-+\nu_-)\ast f = (\nu+\nu_-+\mu_-)\ast f$ and these are both positive with equal norm by property \eqref{ppt} hence
$\mu(H) = \nu(H)$.  
Therefore the map $\Gamma_f$ is well defined.

Claim: This map is positive.

By a similar argument, if $0\leq \nu\ast f$ then $\nu_-\ast f\leq \nu_+\ast f$ hence $\nu_-(H)\leq \nu_+(H)$ and so $\nu(H) = \nu_+(H)-\nu_-(H)$ is positive.

By \cite{bloomheyer}[Lemma 1.2.22] for any $g\in C_C(H, \mathbb{R})$ there is a $\mu\in M(H, \mathbb{R})_C^+$ such that $g\leq \mu\ast f$.  Hence we can apply M. Riesz's extenstion theorem to extend $\Gamma_f$ to a positive linear functional on all of $C_C(H, \mathbb{R})$.
\end{proof}
  
The above lemma shows that the collection of positive linear functionals on $C_C(H)$ sending all translations of the function $f$ by elements of $H$ to $1$ is non-empty.  By applying the fixed point theorem to this collection, we prove the existence of a non-zero positive linear functional on $C_C(H)$ which is fixed under the action of translation by elements of the hypergroup.  This non-zero positive linear functional is precisely a Haar measure.

\begin{cor}
Suppose that $H$ has property \eqref{ppt} for some $f$ 
as above.  Then 
the collection $K$ of all positive linear functionals $\Lambda$ on $C_C(H)$ satisfying:
\[
\Lambda(\mu\ast f)= \mu(H) \text{ for every }\mu\in M_{C}(H).
\]
is non-empty.
\end{cor}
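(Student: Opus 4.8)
The plan is to obtain the desired $\Lambda$ by complexifying the real functional $\Gamma$ produced in Lemma \ref{lemma:ppt}, thereby exhibiting a single element of $K$ and showing it is non-empty. First I would recall that Lemma \ref{lemma:ppt} supplies a positive linear functional $\Gamma$ on $C_C(H,\mathbb{R})$ with $\Gamma(f)=1$ and $\Gamma(\rho\ast f)=\rho(H)$ for all $\rho\in M_C(H,\mathbb{R})$. Every $g\in C_C(H)$ decomposes uniquely as $g=\operatorname{Re}g+i\operatorname{Im}g$ with $\operatorname{Re}g,\operatorname{Im}g\in C_C(H,\mathbb{R})$, so I would set
\[
\Lambda(g)=\Gamma(\operatorname{Re}g)+i\,\Gamma(\operatorname{Im}g).
\]

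Next I would verify that $\Lambda$ has the properties defining membership in $K$. Additivity and real-homogeneity are immediate from those of $\Gamma$; the only point requiring care is homogeneity under multiplication by $i$, which follows from the computation $\Lambda(ig)=\Gamma(-\operatorname{Im}g)+i\,\Gamma(\operatorname{Re}g)=i\bigl(\Gamma(\operatorname{Re}g)+i\,\Gamma(\operatorname{Im}g)\bigr)=i\,\Lambda(g)$, which together with real-linearity gives full $\mathbb{C}$-linearity. Positivity is inherited directly: a function in $C_C(H)^+$ is real-valued with vanishing imaginary part, so $\Lambda(g)=\Gamma(g)\ge 0$.

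Finally I would check the translation identity. Writing $\mu=\mu_1+i\mu_2\in M_C(H)$ with $\mu_1,\mu_2\in M_C(H,\mathbb{R})$, and using that $f$ is real-valued, the real and imaginary parts of $\mu\ast f$ are exactly $\mu_1\ast f$ and $\mu_2\ast f$. Applying the definition of $\Lambda$ and the conclusion of Lemma \ref{lemma:ppt} then yields
\[
\Lambda(\mu\ast f)=\Gamma(\mu_1\ast f)+i\,\Gamma(\mu_2\ast f)=\mu_1(H)+i\,\mu_2(H)=\mu(H),
\]
so $\Lambda\in K$. Since the linear-algebra steps are routine, the entire weight of the argument rests on Lemma \ref{lemma:ppt}; the one thing to watch is that the real/imaginary splitting of $\mu\ast f$ coincides with convolving $f$ against the real and imaginary parts of $\mu$, which is precisely where the hypothesis that $f$ is real-valued is used.
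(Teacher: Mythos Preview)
Your proof is correct and follows exactly the approach sketched in the paper, which simply says ``It suffices to use the complexification of $\Gamma$ from Lemma~\ref{lemma:ppt}.'' You have merely supplied the routine details of that complexification (complex linearity, positivity, and the translation identity via the decomposition $\mu=\mu_1+i\mu_2$), all of which are handled correctly.
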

\begin{proof}
It suffices to use the complexification of $\Gamma$ from lemma \eqref{lemma:ppt}.
\end{proof}

\begin{thrm}
Suppose $H$ has property \eqref{ppt}.  Suppose also that $WUCB_r(H)$ has a left invariant mean.  Then $H$ has a left Haar measure.
\end{thrm}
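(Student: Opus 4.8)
The plan is to realize the set $K$ from the preceding corollary as a compact convex set on which $H$ acts affinely and separately continuously, and then to invoke Theorem \ref{thm:FPWUCB} to extract an invariant functional, which will turn out to be the desired Haar measure. First I would regard each $\Lambda\in K$ as an element of the algebraic dual of $C_C(H)$ and equip the ambient space with the weak* topology, which is Hausdorff and locally convex. Convexity of $K$ is immediate, since positivity, linearity, and the normalization $\Lambda(\mu\ast f)=\mu(H)$ are all preserved under convex combinations. For compactness I would use the domination lemma (\cite{bloomheyer}, Lemma 1.2.22): for each $g\in C_C(H,\mathbb{R})$ there are positive compactly supported measures $\mu,\nu$ with $-\nu\ast f\leq g\leq\mu\ast f$, whence $-\nu(H)\leq\Lambda(g)\leq\mu(H)$ uniformly over all $\Lambda\in K$. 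Thus $K$ embeds into a product of compact intervals, one for each $g$, and since it is cut out by weak*-closed conditions it is closed, hence compact by Tychonoff. The corollary guarantees $K$ is non-empty.

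Next I would define the action. For $x\in H$ and $\Lambda\in K$, set $(x\cdot\Lambda)(g)=\Lambda(\delta_{\check x}\ast g)$ for $g\in C_C(H)$. Since $\delta_{\check x}\ast g\in C_C(H)^+$ whenever $g\in C_C(H)^+$, each $x\cdot\Lambda$ is again positive and linear, and using that the total mass is multiplicative on the hypergroup, so that $(\delta_{\check x}\ast\mu)(H)=\mu(H)$, one checks $(x\cdot\Lambda)(\mu\ast f)=\mu(H)$; hence the action maps $K$ into $K$. The identity axiom holds because $\check e=e$ and $\delta_e\ast g=g$, while the convolution axiom $x\cdot(y\cdot\Lambda)=\int_H (t\cdot\Lambda)\,d(\delta_x\ast\delta_y)(t)$ follows from the involution identity $\delta_{\check y}\ast\delta_{\check x}=(\delta_x\ast\delta_y)\check{}$ together with the weak-integral identity $\int_H \delta_{\check t}\,d(\delta_x\ast\delta_y)(t)=(\delta_x\ast\delta_y)\check{}$, after moving $\Lambda$ through the integral. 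The action is affine in $\Lambda$ by linearity. For separate continuity: in the $\Lambda$ variable, $\Lambda\mapsto\Lambda(\delta_{\check x}\ast g)$ is evaluation at a fixed element of $C_C(H)$, hence weak*-continuous; in the $x$ variable, $x\mapsto\delta_{\check x}\ast g$ is continuous into $C_C(H)$ with supports staying in a fixed compact set near any point (standard from the joint continuity of convolution and of supports), and $\Lambda$ is bounded on functions with that fixed compact support, so $x\mapsto\Lambda(\delta_{\check x}\ast g)$ is continuous.

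With these verifications in hand, the hypothesis that $WUCB_r(H)$ has a left invariant mean lets me apply the second assertion of Theorem \ref{thm:FPWUCB} (the $WUCB_r(H)$, separately continuous version). It yields a fixed point $\Lambda_0\in K$, that is $\Lambda_0(\delta_{\check x}\ast g)=\Lambda_0(g)$ for every $x\in H$ and $g\in C_C(H)$. Since $x\mapsto\check x$ is a bijection of $H$, this is exactly $\Lambda_0(\delta_x\ast g)=\Lambda_0(g)$ for all $x$ and $g$, i.e.\ left invariance. Finally, $\Lambda_0$ is non-zero because $\Lambda_0(f)=1$ (take $\mu=\delta_e$), and being a positive linear functional on $C_C(H)$ it is represented by a non-zero regular Borel measure via the Riesz representation theorem; the left invariance of $\Lambda_0$ is precisely the assertion that this measure is a left Haar measure.

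I expect the main obstacle to be not a single deep step but the accumulation of technical checks needed to cast the problem in the form demanded by Theorem \ref{thm:FPWUCB}: establishing weak* compactness of $K$ through the domination lemma and Tychonoff, and rigorously justifying the interchange of the functional $\Lambda$ with the vector-valued integral against $\delta_x\ast\delta_y$ in the convolution axiom. The continuity in the $x$ variable likewise rests on the standard but non-trivial fact that translation $x\mapsto\delta_{\check x}\ast g$ is continuous on $C_C(H)$ with controlled supports, which ultimately derives from the hypergroup axioms on joint continuity of convolution and of the support map.
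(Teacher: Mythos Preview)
Your proposal is correct and follows essentially the same route as the paper: realize $K$ as a non-empty weak*-compact convex set via the domination lemma, define the action $(x\cdot\Lambda)(g)=\Lambda(\delta_{\check x}\ast g)$, verify the action axioms and separate continuity, and apply Theorem~\ref{thm:FPWUCB} to obtain a fixed point $\Lambda_0$ with $\Lambda_0(f)=1$. The only cosmetic difference is in the continuity-in-$x$ argument, where the paper restricts $\Lambda$ to a compact set to obtain a finite measure and then convolves on the right, whereas you use norm continuity of $x\mapsto\delta_{\check x}\ast g$ in $C_C(H)$ together with local boundedness of $\Lambda$; both are standard and valid.
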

\begin{proof}

Let $K$ be the collection of all positive linear functionals on $C_C(H)$ satisfying:
\[
\Lambda(\mu\ast f)= \mu(H)\text{ for every }\mu\in M_{C}(H).
\]

We equip $K$ with the weak* topology induced by $C_C(H)$.  If $\Lambda_\alpha
\rightarrow \Lambda$ and each $\Lambda_\alpha\in K$ then 
$\Lambda$ is in $K$ and 
so $K$ is closed.
For $g\in C_C(H)$ there is a $\mu_g\in M(H)^+$ such that $|g|\leq \mu_g\ast f$.
Subsequently for any $\Lambda\in K$, $|\Lambda(g)|\leq\Lambda(|g|)\leq \Lambda(\mu_g\ast f)\leq \|\mu_g\|$.  
Hence $K$ is compact.  

To see that $K$ is convex, suppose that $\Lambda_1, \Lambda_2\in K$ and $0\leq c\leq 1$. 
Then
$(c\Lambda_1 +(1-c)\Lambda_2)(\mu\ast f)= c+ 1-c = 1$ so $(c\Lambda_1 +(1-c)\Lambda_2)$
is an element of 
$K$.

It now suffices to show that left translation is an action of $H$ on $K$ and that this action is separately continuous.  From this we can 
apply Theorem \ref{thm:FPWUCB} to find a point in $K$ which is fixed under left translation.  

For $x\in H$, $g\in C_C(H)$ and $\Lambda\in K$ we define $x\cdot \Lambda(g) = \Lambda(\delta_{\check{x}}\ast g)$.  Then $e\cdot\Lambda=\Lambda$ and $x\cdot\Lambda(\mu\ast f) = \Lambda\left((\delta_{\check{x}}\ast\mu)\ast f\right) = 1$
so
 $K$ is closed under the action of $H$.  Furthermore
\begin{align*}
x\cdot (y\cdot\Lambda)(g)&=y\cdot\Lambda(\delta_{\check{x}}\ast g)\\
&= \Lambda(\delta_{\check{y}}\ast\delta_{\check{x}}\ast g) \\
&= \int \Lambda(\delta_{\check{t}}\ast g) d(\delta_x\ast \delta_y)(t))\\
&= \int t\cdot\Lambda(g) d(\delta_x\ast \delta_y)(t).
\end{align*}

To see that this action is separately continuous, consider a net $x_\alpha\rightarrow x\in H$. 
Eventually $x_\alpha$ will stay within a compact neighbourhood $N$ of $x$.  Then for $\Lambda\in K$ and $g\in C_C(H)$ it suffices to consider the restriction $ \Lambda_{|_{\check{N}\ast\supp(g)}}$ of $\Lambda$ which is a finite measure on the compact set $\check{N}\ast\supp(g)$.  This allows us to consider the right translate of $g$ by this measure (which is a continuous function) and we get
\begin{align*}
x_\alpha\cdot \Lambda(g) &= \Lambda(\delta_{\check{x_\alpha}}\ast g)\\
&=\Lambda_{|_{\check{N}\ast\supp(g)}}(\delta_{\check{x_\alpha}}\ast g)\\
&=\delta_{x_\alpha}(g\ast\check{\Lambda}_{|_{\check{N}\ast\supp(g)}})\\
&\rightarrow \delta_x(g\ast\check{\Lambda}_{|_{\check{N}\ast\supp(g)}})\\
&=x\cdot \Lambda(g). 
\end{align*}

Now suppose there is a net $\Lambda_\beta\rightarrow \Lambda\in K$. So for $x\in H$ and $f\in C_C(H)$
\begin{align*}
x\cdot\Lambda_\beta(f) &=\Lambda_\beta(\delta_{\check{x}}\ast f)\\
&\rightarrow \Lambda(\delta_{\check{x}}\ast f)\\
&=x\cdot \Lambda (f).
\end{align*}

Therefore the action is separately continuous.

By Theorem \ref{thm:FPWUCB} there is a fixed point $\Lambda_0$ in $K$ under the action of $H$.  This fixed point is a positive linear functional on $C_C(H)$ satisfying 
\[
\Lambda_0(g) = \Lambda_0(\delta_x\ast g) \ \ \forall g\in C_C(H), x\in H.
\]
It is non-zero since $\Lambda_0(f) = 1$ so $\Lambda_0$ is a left Haar measure for $H$.
\end{proof}
\begin{cor}
Let $H$ be a hypergroup with a left translation invariant mean on $WUCB_r(H)$.  The following are equivalent:
\begin{enumerate}
\item
There is a left Haar measure on $H$.
\item
The property of positivity of translations \eqref{ppt} holds for every $f\in C_C(H)^+$.
\item
The property of positivity of translations \eqref{ppt} holds for some nonzero $f\in C_C(H)^+$.
\end{enumerate}
\end{cor}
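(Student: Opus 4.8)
The plan is to prove the cycle $(3)\Rightarrow(1)\Rightarrow(2)\Rightarrow(3)$, which yields the full equivalence. Two of these three implications require almost no new work. The implication $(3)\Rightarrow(1)$ is exactly the preceding theorem: under the standing hypothesis that $WUCB_r(H)$ carries a left invariant mean, property \eqref{ppt} for a single nonzero $f\in C_C(H)^+$ already forces the existence of a left Haar measure, since ``$H$ has property \eqref{ppt}'' means precisely that \eqref{ppt} holds for some such $f$. The implication $(2)\Rightarrow(3)$ is immediate, as $H$ is locally compact Hausdorff and therefore, by Urysohn's lemma, possesses at least one nonzero $f\in C_C(H)^+$; a property holding for all such $f$ holds in particular for one of them.

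The substance of the corollary is the implication $(1)\Rightarrow(2)$. Let $\lambda$ be a left Haar measure, fix an arbitrary nonzero $f\in C_C(H)^+$, and suppose $\mu,\nu\in M^+(H)$ satisfy $\mu\ast f\leq\nu\ast f$. The first step is to record the averaging identity $\lambda(\mu\ast f)=\|\mu\|\,\lambda(f)$, valid for every $\mu\in M^+(H)$ and forming the hypergroup analogue of the group formula in the remark above. I would derive it by writing $(\mu\ast f)(y)=\int_H(\delta_x\ast f)(y)\,d\mu(x)$ and applying Tonelli's theorem; the interchange is justified because the integrand is non-negative ($f\geq 0$ and each $\delta_x\ast\delta_y$ is a probability measure), so no integrability hypothesis on the possibly non-compactly-supported function $\mu\ast f$ is needed. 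Since $\delta_x\ast f\in C_C(H)$ for each $x$, the defining left invariance of $\lambda$ makes the inner integral the constant $\lambda(\delta_x\ast f)=\lambda(f)$, and the outer integral collapses to $\mu(H)\lambda(f)=\|\mu\|\,\lambda(f)$. Applying this to both $\mu$ and $\nu$, and using that $\lambda$ is positive and hence order-preserving, the inequality $\mu\ast f\leq\nu\ast f$ gives $\|\mu\|\,\lambda(f)\leq\|\nu\|\,\lambda(f)$.

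The one delicate point --- and the step I expect to be the main obstacle --- is the strict positivity $\lambda(f)>0$, without which the factor $\lambda(f)$ cannot be cancelled to reach $\|\mu\|\leq\|\nu\|$. Rather than invoking the full support of a Haar measure, I would argue by contradiction using tools already at hand: if $\lambda(f)=0$, then the averaging identity forces $\lambda(\mu\ast f)=0$ for every $\mu\in M^+(H)$. By \cite{bloomheyer}[Lemma 1.2.22], every $g\in C_C(H)^+$ is dominated by some $\rho\ast f$ with $\rho\in M_C(H)^+$, so order-preservation gives $0\leq\lambda(g)\leq\lambda(\rho\ast f)=\|\rho\|\,\lambda(f)=0$. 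As such $g$ linearly span $C_C(H)$, this would make $\lambda$ the zero functional, contradicting that a Haar measure is non-zero. Hence $\lambda(f)>0$, the cancellation is legitimate, and $(1)\Rightarrow(2)$ follows, closing the cycle.
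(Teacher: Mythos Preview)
Your proof is correct and follows the route the paper intends. The paper omits an explicit proof of the corollary, but the Remark opening Section~4 already sketches $(1)\Rightarrow(2)$ via the two Haar-measure identities (positivity and $\int\mu\ast f=\mu(H)\int f$), and the preceding theorem is precisely $(3)\Rightarrow(1)$; your argument fills in these details, including the strict positivity $\lambda(f)>0$ via \cite{bloomheyer}[Lemma~1.2.22], which the paper leaves implicit.
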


\proof[Acknowledgements]
Thanks to Hanyang University for providing funding in the form of a research fund for new professors.

\bibliographystyle{amsplain}

\end{document}